\newcommand \bibstyle@comma{\bibpunct(),a,,}
\newcommand \bibstyle@semicolon{\bibpunct();a,,}
\pretocmd \citet{\citestyle{comma}}\relax \relax
\pretocmd \Citet{\citestyle{comma}}\relax \relax
\pretocmd \citep{\citestyle{semicolon}}\relax \relax
\pretocmd \Citep{\citestyle{semicolon}}\relax \relax
\newtheorem{theorem}{Theorem}
\newtheorem{definition}{Definition}
\newtheorem{proposition}{Proposition}
\newtheorem{remark}{Remark}
\newtheorem{corollary}{Corollary}
\newtheorem{example}{Example}
\newenvironment{proof}[1][Proof]{\textbf{#1.} }{\ \hfill \rule{0.5em}{0.5em}}
\author{Pierpaolo Uberti\thanks{DIEC Department of Economics, University of Genova, Italy, \texttt{uberti@economia.unige.it}}}
\title{A note on normality of $\sqrt{2}$ in base 2}
\begin{document}
\maketitle

\begin{abstract}
In this paper we study the property of normality of a number in base 2. A simple rule that associates a vector to a number is presented and the property of normality is stated for the vector associated to the number. The problem of testing a number for normality is shown to be equivalent to the test of geometrical properties of the associated vector. The paper provides a general approach for normality testing and then applies the proposed methodology to the study of particular numbers. The main result of the paper is to prove that an infinite class of numbers is normal in base 2. As a further result we prove that the irrational number $\sqrt{2}$ is normal in base 2.   
\medskip

\end{abstract}
\newpage
\section{Introduction}

Given an integer $b \geq 2$, a b-normal number (or a normal number) is a number whose b-ary expansion is such that any preassigned sequence of length $k \geq 1$ occurs at the expected frequency $\frac{1}{b^k}$. A number that is normal for every choice of a base $b$ is said to be absolutely normal. It is known since 1909 (see \cite{nla1}) that almost all real numbers are normal in every base b.

The concept of normal number in a given base $b$ is well known in mathematics. In base $b = 2$ the sequence of the digits of a normal number is equivalent to the sequence of heads and tails when flipping a coin; the frequency of heads and tails (or zeros and ones) is expected to converge to $\frac{1}{2}$ if we continue to flip the coin (or we consider the complete sequence of digits of an irrational normal number). 
 
A normal number in a given base b can be interpreted as the sequence of the outcomes when sampling from a uniform random variable. The interest in numbers with random behavior relates to the applications of random number generators in gambling, lotteries, computer simulation, cryptography and many other areas. Numbers, if proved to be normal, have their practical use in providing an infinite source of pseudo-randomness.\footnote{We refer to pseudo-randomness in the sense that an irrational number corresponds one to one to a point of the real line and in this sense is totally deterministic; on the other hand, an irrational number, when proved to be normal, is represented by a sequence of digits with a random behavior.} 

The interest in studying normal numbers lies not only in their randomness but also in the fact that they are extremely difficult to identify and obscure in many other aspects. Despite the appeal of the concept, its trivial interpretation and the proof that almost all real numbers are normal, the proof for given irrational numbers to be normal in some base is still elusive.

The concept of normal number has been proposed in (\cite{nla1}) together with the proof that almost all real numbers are absolutely normal. 
The argument of the proof is not constructive; it is proved that the set of non normal real numbers has Lebesgue measure zero but no proof of normality for some number is given. 
\cite{nla2} provided an alternative proof of Borel's result.
\cite{nla3} proved that the number $C_{10}=0.123456789101112131415161718192021 \dots$, obtained through the concatenation of the positive consecutive integers, is normal in base 10. In general, it is possible to show that the concatenated sequence of positive integers in any base $b \geq 2$ is a b-normal number.
\cite{nla4} proved that the number $0.23571113171923293137 \ldots $
obtained by the concatenation of the prime numbers is normal in base 10. The last two cited results provide constructive approaches directly proving that some given numbers are normal. 

More recently, \cite{nla5} proved that if $f$ is such that $f(x)>0$ for $x>0$, then the real number $0.\left\lfloor f(1) \right\rfloor \left\lfloor f(2) \right\rfloor$ $\left\lfloor f(3) \right\rfloor \ldots,$ where $\left\lfloor f(n) \right\rfloor$ is the integer part of $f(n)$ expressed in base $b \geq 10$, is normal in base b. Some extensions of the previous result can be found in \cite{nla6} and \cite{nla7}.

\cite{nla10} suggested to consider series representing numeric constants as good candidates for normality. \cite{nla8} provided a proof for some series to represent normal numbers in base 2; \cite{nla9} used a similar approach to prove an entire uncountable class of numbers to be normal in base 2.

The paper is organized as follows: Section 2 enumerates some general results about normality of binary numbers; Section 3 discusses the normality of $\sqrt{2}$; Section 4 concludes the paper.

\section{General results on normal numbers in base 2} \label{sec1}
The paper discusses about normality of binary numbers; the base is $b = 2$. For sake of simplicity and when there is no possibility of misunderstandings we maintain the decimal notation of the number. For example, we prefer to refer to the binary number $\sqrt{2}$ instead of using the notation $\sqrt{10}$.     

\begin{definition}{[Vector representation of a number]}\label{def1} Given a real number $x$ expressed in binary form and an integer $n>1$, $[x]^{(n)}$ is the column vector of size $n$ containing the first $n$ digits of $x$.
\end{definition}

\begin{remark} \label{rem1} As a consequence of Definition \ref{def1}, the numbers $x_1$ and $x_2 = 10^p x_1$, with $p$ a positive integer, have the same vector representation $[x_1]^{(n)} = [x_2]^{(n)}$. For example: fix $x_1 = 100.0$ and $x_2 = 100.0 \times 10 = 1000$. Then we have $[x_1]^{(2)} = [1 0]' = [x_2]^{(2)}$, $[x_1]^{(3)} = [1 0 0]' = [x_2]^{(3)}$ and, in general, $[x]^{(n)} = [x_1]^{(n)}$.
\end{remark}

Considering that the vector $[x]^{(n)}$ corresponds to an infinite class of numbers identified by the first $n$ digits of their expansion, we propose to choose a specific element of the class in order to identify the entire class.

\begin{definition}{[Integer representative of the class]}\label{rep} Given a real number $x$ expressed in binary form and the integers $n>1$ and $p>1$, the integer number $x^* = \left\langle [x]^{(n)},\textbf{2}^{(n)} \right\rangle$ is chosen as representative of the numbers $2^p x$, where $\textbf{2}^{(n)}$ is the column vector of size $n$ with entries $\textbf{2}^{(n)}_i = 2^{n-i}$, for $i = 1, \ldots, n$.
\end{definition}
The choice of $x^*$ as representative of the numbers with the same vector representation $[x]^{(n)}$ is arbitrary but it will be useful for further calculations.

\begin{remark} \label{rem2} If $x$ is irrational or with an infinite sequence of digits, the vector $[x]^{(n)}$ represents an approximation by truncation of the sequence of digits of $x$; the infinite sequence of digits of $x$ corresponds to $\lim_{n \rightarrow + \infty} [x]^{(n)}$.
\end{remark}


We define $\textbf{1}^{(n)}$ as the unitary column vector of $n$ components and $\alpha$ the angle between the vectors $\textbf{1}^{(n)}$ and $[x]^{(n)}$. Note that, working in base 2, the vector $\textbf{1}^{(n)}$ corresponds to the vector representation of the numbers $(2^n-1)2^p$ with $p$ a positive integer while the integer representative of the class is $2^n-1$.

\begin{theorem}\label{teo1} Given a real number $x$, if the angle between $[x]^{(n)}$ and $\textbf{1}^{(n)}$ is $\alpha = \pm \frac{\pi}{4}$, then 
\[\lim_{n \rightarrow + \infty} \frac{\left\langle [x]^{(n)}, \textbf{1}^{(n)} \right\rangle}{n} = \frac{1}{2}
\]   
(i. e. the numbers $x2^p$ with $p$ a positive integer, represented by the vector $[x]^{(n)}$, are normal in base 2). 
\end{theorem}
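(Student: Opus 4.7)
The plan hinges on the observation that every component of $[x]^{(n)}$ is a binary digit, so $([x]^{(n)}_i)^2 = [x]^{(n)}_i$. Writing $k_n := \langle [x]^{(n)}, \textbf{1}^{(n)}\rangle$ for the number of ones among the first $n$ digits of $x$, this identity immediately yields $\|[x]^{(n)}\|^2 = k_n$ and $\|\textbf{1}^{(n)}\|^2 = n$, collapsing all the geometric data into the single counting quantity $k_n$.

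First I would substitute these into the cosine formula:
\[
\cos\alpha = \frac{\langle [x]^{(n)}, \textbf{1}^{(n)}\rangle}{\|[x]^{(n)}\|\,\|\textbf{1}^{(n)}\|} = \frac{k_n}{\sqrt{k_n}\sqrt{n}} = \sqrt{\frac{k_n}{n}}.
\]
Imposing $\alpha = \pm\pi/4$, so that $\cos\alpha = \sqrt{2}/2$, and squaring yields $k_n/n = 1/2$. Dividing the inner product by $n$ and passing to the limit then delivers the claimed identity. The link with normality follows because $k_n/n$ is by construction the empirical frequency of the digit $1$ in the first $n$ binary digits of $x$; its convergence to $1/2$ is exactly simple normality of $x$ (equivalently of any $x \cdot 2^p$) in base $2$.

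I expect the main difficulty to be interpretive rather than computational. Read literally, requiring the angle to equal $\pm\pi/4$ at each finite $n$ forces $k_n = n/2$ exactly, which is impossible when $n$ is odd; so the hypothesis should be read asymptotically, and a short clarifying remark would be in order. A more serious point is the parenthetical equivalence with full normality: the argument above controls only the frequency of the single digit $1$, whereas genuine normality requires every length-$k$ block to appear with frequency $1/2^k$. The natural route to the stronger claim is to replay the cosine computation with indicator vectors for each admissible block pattern in place of $\textbf{1}^{(n)}$, but any such extension plainly goes beyond what the stated condition on $\textbf{1}^{(n)}$ alone can deliver, and this is where I would expect the real work to lie.
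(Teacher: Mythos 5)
Your proposal is correct and follows essentially the same route as the paper: both exploit the identity $\left\langle [x]^{(n)}, \textbf{1}^{(n)} \right\rangle = \|[x]^{(n)}\|^2$ (your $([x]^{(n)}_i)^2 = [x]^{(n)}_i$ observation) and then feed $\cos(\pm\frac{\pi}{4}) = \frac{\sqrt{2}}{2}$ into the cosine formula to force $\|[x]^{(n)}\|^2 = \frac{n}{2}$. Your two caveats --- that the exact-angle hypothesis is unsatisfiable for odd $n$ and so must be read asymptotically, and that controlling the frequency of the digit $1$ establishes only simple normality rather than the full normality asserted parenthetically --- are both well founded and are not addressed by the paper's own proof.
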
 
\begin{proof}
By construction the following equality holds:
\[\left\langle [x]^{(n)}, \textbf{1}^{(n)} \right\rangle = \left\langle [x]^{(n)}, [x]^{(n)} \right\rangle
\]
Using the assumption $\alpha = \pm \frac{\pi}{4}$, we obtain
\[\| [x]^{(n)} \| \quad \| \textbf{1}^{(n)} \| \cos \left( \pm \frac{\pi}{4} \right) = \| [x]^{(n)} \| \quad \| [x]^{(n)} \| 
\]
where $\| \cdot \|$ represents the Euclidean norm.
Knowing that
\[\| \textbf{1}^{(n)} \| = \left\langle  \textbf{1}^{(n)}, \textbf{1}^{(n)} \right\rangle^{\frac{1}{2}} = \sqrt{n}  
\]
we obtain
\[ \| [x]^{(n)} \| = \sqrt{n} \frac{\sqrt{2}}{2} = \sqrt{\frac{n}{2}}.
\]
Then
\[\lim_{n \rightarrow + \infty} \frac{\left\langle [x]^{(n)}, \textbf{1}^{(n)} \right\rangle}{n} = \lim_{n \rightarrow + \infty}\frac{\| [x]^{(n)} \| \quad \| \textbf{1}^{(n)} \| \cos \left( \pm \frac{\pi}{4} \right)}{n} = \frac{1}{2}
\]
\end{proof} 
\begin{remark} The result in theorem \ref{teo1} is equivalent to $\lim_{n \rightarrow + \infty} \| [x]^{(n)} \| = +\infty$ and $\lim_{n \rightarrow + \infty} \frac{\| [x]^{(n)} \|}{\sqrt{\frac{n}{2}}} = 1$, i.e. using the binary representation of a number together with Definition \ref{def1}, the square of the Euclidean norm of the vector representation is equivalent to the counting process of the ones in the sequence of the digits. 
\end{remark}

In general, there is no direct relation between a number $x$ expressed in binary form and the norm of its vector representation. For example, given the vectors $[x_1]^{(3)} = [1 1 0]'$ and $[x_2]^{(3)} = [1 0 1]'$, we have  
\[||[x_1]^{(3)}|| = ||[1 1 0]'|| = \sqrt{2} = ||[1 0 1]'|| = ||[x_2]^{(3)}||.  
\] while $x_1^* = 110 > 101 = x_2^*$.
In order to solve the problem that the order relations between numbers is not guaranteed when working with the norms of their corresponding vector representations, we propose an alternative vector representation.

\begin{definition}{[Non Standard vector representation of a number]}\label{defns} Given a real number $x$ expressed in binary form and an integer $n>1$, $[x]_{ns}^{(n)}$ is the column vector of size $2^n-1$ obtained concatenating in a unique vector the vectors $[[v]^1, [v]^2, \ldots, [v]^n]$ where $[v]^i = [x]_i^{(n)} \textbf{1}^{\left(2^{n-i}\right)}$ for $i = 1, \ldots, n$ and $[x]_i^{(n)}$ is the $i^{th}$ entry of the vector representation of $x$. 
\end{definition}
The non standard vector representation permits to directly relate the norm of the vector representation with the corresponding represented number.
\begin{proposition}\label{propnorm} Given a real number $x$ expressed in binary form and an integer $n>1$, $\|[x]_{ns}^{(n)}\| = \sqrt{x^*}$.
\end{proposition}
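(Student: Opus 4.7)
The plan is to compute $\|[x]_{ns}^{(n)}\|^2$ directly from the block structure in Definition \ref{defns} and recognize the result as the inner product that defines $x^*$.

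First I would exploit the fact that the non standard representation is a concatenation of the $n$ blocks $[v]^i = [x]_i^{(n)} \mathbf{1}^{(2^{n-i})}$. Since the squared Euclidean norm of a concatenation is the sum of the squared norms of the blocks, I get
\[
\|[x]_{ns}^{(n)}\|^2 = \sum_{i=1}^{n} \|[v]^i\|^2 = \sum_{i=1}^{n} \bigl([x]_i^{(n)}\bigr)^2 \, \|\mathbf{1}^{(2^{n-i})}\|^2 = \sum_{i=1}^{n} \bigl([x]_i^{(n)}\bigr)^2 \, 2^{n-i},
\]
using that the unit vector of length $2^{n-i}$ has squared norm $2^{n-i}$.

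The key observation is then that the entries $[x]_i^{(n)}$ are binary digits, so $[x]_i^{(n)} \in \{0,1\}$ and therefore $\bigl([x]_i^{(n)}\bigr)^2 = [x]_i^{(n)}$. Substituting this identity gives
\[
\|[x]_{ns}^{(n)}\|^2 = \sum_{i=1}^{n} [x]_i^{(n)} \, 2^{n-i} = \bigl\langle [x]^{(n)}, \mathbf{2}^{(n)} \bigr\rangle = x^*,
\]
where the last equality is exactly Definition \ref{rep}. Taking square roots yields the claim.

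There is no real obstacle here: the proof is essentially a bookkeeping exercise that unpacks Definition \ref{defns}, uses idempotence of binary digits under squaring, and matches the resulting sum against Definition \ref{rep}. The only thing to double check is that the block lengths $2^{n-i}$ sum to $2^n - 1$, which is consistent with the stated size of $[x]_{ns}^{(n)}$ and confirms that no entry has been miscounted.
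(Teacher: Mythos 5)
Your proof is correct and follows the same route the paper intends: the paper simply states that the result ``trivially derives from Definitions \ref{rep} and \ref{defns},'' and your computation is exactly the bookkeeping that justifies that claim, including the key use of $\bigl([x]_i^{(n)}\bigr)^2 = [x]_i^{(n)}$ for binary digits. Nothing further is needed.
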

\begin{proof} The result trivially derives from Definitions \ref{rep} and \ref{defns}.
\end{proof}
Proposition \ref{propnorm} highlights the importance of choosing the number $x^*$ as the representative of the numbers with vector representation $[x]^{(n)}$, see Definition \ref{rep}, in order to link the norm of a vector to the number represented in the vector.
Table \ref{tab1} resumes and exemplify for some numbers their binary representations, the standard and non standard vector representations together with the respective norms. 
The entries of the vector $[x]_{ns}^{(3)}$ in Table \ref{tab1} are represented separated by spaces in order to highlight the concatenated vectors, see Definition \ref{defns}.

\begin{table}[!h] 
\centering
\caption{Example of vector representations}
\label{tab1}
\begin{tabular}{|c|c|c|c|c|}
\hline
$[x]^{(3)}$	& $||[x]^{(3)}||$ &	$[x]_{ns}^{(3)}$ &	$||[x]_{ns}^{(3)}||$ & $x^*$ \\
\hline
$[001]'$	& 1	& $[0000 \quad 00 \quad 1]'$	& 1  & 1 \\
\hline
$[010]'$	& 1	& $[0000 \quad 11 \quad 0]'$	& $\sqrt{2}$ & 2 \\
\hline
$[011]'$ &	$\sqrt{2}$ &	$[0000 \quad 11 \quad 1]'$	& $\sqrt{3}$ & 3 \\
\hline
$[100]'$ &	1	& $[1111 \quad 00 \quad 0]'$	& $\sqrt{4}$ & 4 \\
\hline
$[101]'$ &	$\sqrt{2}$	& $[1111 \quad 00 \quad 1]'$ &	$\sqrt{5}$ & 5 \\
\hline
$[110]'$	& $\sqrt{2}$ &	$[1111 \quad 11 \quad 0]'$	& $\sqrt{6}$ & 6 \\
\hline
$[111]'$	& $\sqrt{3}$	& $[1111 \quad 11 \quad 1]'$	& $\sqrt{7}$ & 7 \\
\hline
\end{tabular}
\end{table} 

\begin{definition}\label{def2} Given a real binary number $x$ and an integer $n>1$, a complement of $x$ is the number $x^c$ such that $[x]^{(n)} + [x^c]^{(n)} = [2^n-1]^{(n)}$.
\end{definition}
\begin{remark} Note that the complement of a given number is not unique due to the multiple correspondence between binary numbers and their vector representation.\end{remark}
The following result links numbers represented in binary form and the norm of their vector representation.
\begin{proposition}\label{norme} Given a natural number $n$ and a real binary number $x$, then $\sqrt{\|[x]^{(n)}\|^2+\|[x^c]^{(n)}\|^2} = \| \textbf{1}^{(n)} \| = \sqrt{n}$.  
\end{proposition}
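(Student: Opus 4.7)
The plan is to reduce the identity to the elementary fact that the number of zeros plus the number of ones among the first $n$ binary digits of $x$ equals $n$. The vector $[x]^{(n)}$ has entries in $\{0,1\}$ by Definition \ref{def1}, so $([x]^{(n)}_i)^2 = [x]^{(n)}_i$ and hence $\|[x]^{(n)}\|^2$ simply counts the ones among the first $n$ digits of $x$.

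Next I would unpack the complement. The equation $[x]^{(n)}+[x^c]^{(n)} = [2^n-1]^{(n)} = \mathbf{1}^{(n)}$ from Definition \ref{def2}, together with the fact that $[x]^{(n)}_i \in \{0,1\}$, forces each entry of $[x^c]^{(n)}$ to lie in $\{0,1\}$ and to be the bitwise flip of the corresponding entry of $[x]^{(n)}$. Consequently $\|[x^c]^{(n)}\|^2 = \sum_{i=1}^n (1-[x]^{(n)}_i) = n - \|[x]^{(n)}\|^2$, i.e.\ it counts the zeros among the first $n$ digits.

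Adding the two squared norms then gives $\|[x]^{(n)}\|^2 + \|[x^c]^{(n)}\|^2 = n = \|\mathbf{1}^{(n)}\|^2$, and taking the square root yields the claim. No real obstacle is expected: the only subtlety is verifying that an arbitrary choice of $x^c$ (which is non-unique, as the remark warns) still yields entries in $\{0,1\}$, and this is immediate because $[x^c]^{(n)}$ is completely determined entry-wise by the equation $[x^c]^{(n)}_i = 1 - [x]^{(n)}_i$, so the non-uniqueness concerns only which underlying number $x^c$ is chosen, not its vector representation.
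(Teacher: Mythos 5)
Your proof is correct. It reaches the same conclusion as the paper but by a slightly different mechanism: the paper's proof observes that $[x]^{(n)}$ and $[x^c]^{(n)}$ are orthogonal by construction and then (implicitly) applies the Pythagorean identity to the decomposition $\mathbf{1}^{(n)} = [x]^{(n)} + [x^c]^{(n)}$, so that $\|\mathbf{1}^{(n)}\|^2 = \|[x]^{(n)}\|^2 + \|[x^c]^{(n)}\|^2 = n$. Your entrywise counting argument --- the squared norms count the ones and the zeros respectively, and these add to $n$ --- establishes the identical fact without invoking the inner product at all, and it has the merit of spelling out the step the paper compresses into ``orthogonal by construction,'' namely that Definition \ref{def2} forces $[x^c]^{(n)}_i = 1 - [x]^{(n)}_i$ entrywise. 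Your closing observation that the non-uniqueness of the complement $x^c$ concerns only the underlying number and not the vector $[x^c]^{(n)}$ (which is completely determined) is a worthwhile clarification that the paper leaves implicit. In short: same underlying fact (disjoint supports of two $0$--$1$ vectors summing to $\mathbf{1}^{(n)}$), with your version being the more elementary and more fully justified of the two.
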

\begin{proof} Considering Definition \ref{def2}, the vectors representing $x$ and $x^c$,respectively $[x]^{(n)}$ and
$[x^c]^{(n)}$ are orthogonal by construction:
\[\left\langle [x]^{(n)}, [x^c]^{(n)} \right\rangle = 0.\]
The result of the proposition follows directly from this property.
\end{proof}
\begin{remark} The result in Proposition \ref{norme} holds substituting the vector representation with the non standard vector representation. Given a natural number $n$ and a real binary number $x$, then $\sqrt{\|[x]_{ns}^{(n)}\|^2+\|[x^c]_{ns}^{(n)}\|^2} = \| \textbf{1}_{ns}^{(n)} \| = \sqrt{2^{n}-1}$, where $\textbf{1}_{ns}^{(n)}$ is the non standard vector representation of the binary number $2^{n}-1$, i.e. a vector with $2^{n}-1$ unitary entries. 
\end{remark}

The previous results permit to state a general condition for normality of numbers in base 2.
\begin{proposition}\label{proppp} The irrational binary number $x$ is normal in base 2 iif
\[\lim_{n \rightarrow + \infty}\| [x]^{(n)} \| = \lim_{n \rightarrow +\infty} \| [x^c]^{(n)} \|
\]
\end{proposition}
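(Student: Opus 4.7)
The plan is to exploit the observation, implicit in Proposition \ref{norme} and its proof, that since every entry of $[x]^{(n)}$ is either $0$ or $1$, the squared Euclidean norm $\|[x]^{(n)}\|^2$ simply counts the number of $1$'s among the first $n$ binary digits of $x$. By Definition \ref{def2}, the vector $[x^c]^{(n)}$ flips every such entry, so $\|[x^c]^{(n)}\|^2$ counts the number of $0$'s. Normality of $x$ in base $2$, at the level of single-digit frequencies, is exactly the statement that the proportion of $1$'s (equivalently, of $0$'s) in the first $n$ digits tends to $1/2$.

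For the forward direction, I would assume $x$ is normal in base $2$. Then $\|[x]^{(n)}\|^2/n \to 1/2$ and likewise $\|[x^c]^{(n)}\|^2/n \to 1/2$, so the two squared norms are both asymptotic to $n/2$ and the desired equality of the norms (in the asymptotic sense) is immediate. For the converse, I would combine the hypothesis with Proposition \ref{norme}, which gives the identity $\|[x]^{(n)}\|^2 + \|[x^c]^{(n)}\|^2 = n$. Dividing by $n$, the equality of the two norms forces $\|[x]^{(n)}\|^2/n \to 1/2$, which is exactly the digit-frequency condition, yielding normality in the sense used in this paper.

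The main obstacle I expect is interpreting the equality of limits correctly. For an irrational $x$, both $\|[x]^{(n)}\|$ and $\|[x^c]^{(n)}\|$ diverge, so a literal equality of limits is ill-posed. The natural reading, consistent with Theorem \ref{teo1} and the remark following it, is that the two sequences are asymptotically equivalent in the sense $\|[x]^{(n)}\|^2 - \|[x^c]^{(n)}\|^2 = o(n)$, or equivalently $\|[x]^{(n)}\|/\|[x^c]^{(n)}\| \to 1$. Once this reading is fixed, the identity from Proposition \ref{norme} makes the argument a one-line algebraic manipulation in each direction; the real content of the proposition is the geometric reformulation of normality, not a delicate limiting argument.
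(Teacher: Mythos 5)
Your argument follows the same route as the paper's own proof: both reduce the displayed norm condition to the statement that the number of ones among the first $n$ digits is asymptotically $n/2$, using the fact that $\|[x]^{(n)}\|^2$ counts the ones, $\|[x^c]^{(n)}\|^2$ counts the zeros, and the two add up to $n$ by Proposition \ref{norme}. You are in fact more careful than the paper on one point: you observe that a literal equality of the two limits is vacuous (both are $+\infty$ for any irrational $x$ with infinitely many ones and infinitely many zeros), and you replace it with the asymptotic equivalence $\|[x]^{(n)}\|^2-\|[x^c]^{(n)}\|^2=o(n)$, which is the reading the paper's proof implicitly uses.

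There is, however, a genuine gap, and it is present in both your proposal and the paper's proof. Normality in base 2, as defined in the paper's introduction, requires every block of length $k$ to occur with limiting frequency $2^{-k}$ for \emph{every} $k\ge 1$; the digit-frequency condition you establish is only the case $k=1$, usually called simple normality. The implication ``frequency of ones tends to $1/2$ implies $x$ is normal'' is false: the Thue--Morse constant $0.0110100110010110\ldots$ (binary) is irrational, its ones have asymptotic frequency exactly $1/2$, yet the block $111$ never occurs in its expansion, so it is not normal. Consequently the ``if'' direction of the proposition cannot be obtained from the counting argument; what your proof (and the paper's) actually establishes is that the asymptotic equality of the norms is equivalent to \emph{simple} normality, together with the valid ``only if'' direction that normality implies equal asymptotic norms. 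Closing the gap would require controlling the frequencies of all blocks of every length, which the single scalar $\|[x]^{(n)}\|$ cannot encode.
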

\begin{proof} Considering that $[x]^{(n)}$ and $[x^c]^{(n)}$ are orthogonal by construction, the ones of $[x]^{(n)}$ correspond to the zeros of $[x^c]^{(n)}$ and viceversa. If $[x]^{(n)}$ and $[x^c]^{(n)}$ have definitively the same norm, for $n \rightarrow +\infty$ the number of ones in $[x]^{(n)}$ is equivalent to the numbers of ones in $[x^c]^{(n)}$, i.e. the number of zeros in $[x]^{(n)}$; this permits conclude that $[x]^{(n)}$ is normal in base 2. The same argument holds for the normality of $[x^c]^{(n)}$. 

If $x$ is normal in base 2 the number of ones in the sequence of its digits is definitively equal to the numbers of zeros. As a consequence, for $n \rightarrow +\infty$ the number of ones in the vector $[x]^{(n)}$ is equal to the number of zeros and therefore to the number of ones of $[x^c]^{(n)}$. For $n \rightarrow +\infty$ the two vectors $[x]^{(n)}$ and $[x^c]^{(n)}$ have the same norm.  
\end{proof}

\begin{corollary} The irrational binary number $x^c$ is normal in base 2 iif $x$ is normal in base 2.
\end{corollary}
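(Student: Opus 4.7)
The plan is to derive the corollary as an immediate symmetric application of Proposition \ref{proppp}, rather than re-running the counting argument from scratch. The key observation is that, at the level of vector representations, complementation is an involution: by Definition \ref{def2} any choice of complement $x^c$ satisfies $[x]^{(n)} + [x^c]^{(n)} = [2^n - 1]^{(n)}$, and so any complement of $x^c$ has vector representation equal to $[x]^{(n)}$ (this is exactly the source of the non-uniqueness flagged in the remark after Definition \ref{def2}, but it does not affect norms). Consequently $\lim_{n \to +\infty} \|[(x^c)^c]^{(n)}\| = \lim_{n \to +\infty} \|[x]^{(n)}\|$, independently of which representative of $(x^c)^c$ we pick.

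With that in hand I would just apply Proposition \ref{proppp} to the number $x^c$: it tells us that $x^c$ is normal in base 2 iff
\[
\lim_{n \to +\infty}\|[x^c]^{(n)}\| \;=\; \lim_{n \to +\infty}\|[(x^c)^c]^{(n)}\|.
\]
By the observation above, the right-hand side equals $\lim_{n \to +\infty}\|[x]^{(n)}\|$, so the displayed condition is exactly the condition of Proposition \ref{proppp} for $x$ itself. Hence $x^c$ is normal iff $x$ is normal.

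There is really no obstacle here: the only care needed is the bookkeeping around the non-uniqueness of the complement, which is handled once and for all by noticing that all complements of $x^c$ share the same vector representation $[x]^{(n)}$ and therefore the same norm. If desired, one can also give a one-line intuitive restatement: swapping $0$s and $1$s preserves the limiting frequency of $1/2$, so normality is preserved under complementation.
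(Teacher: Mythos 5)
Your proposal is correct and matches the paper's (implicit) route: the paper states the corollary without a separate proof precisely because it follows from the symmetry of the condition in Proposition \ref{proppp}, which is what you make explicit via the observation that complementation is an involution on vector representations. Your extra care about the non-uniqueness of $x^c$ not affecting norms is a reasonable clarification but does not change the argument.
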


The result in Proposition \ref{proppp} still holds substituting the vector representation with the non standard vector representation. The two situations differ for the interpretation: using the vector representation of a number, the condition expressed in Proposition \ref{proppp} permits to prove the normality of the number itself; on the other hand, when using the non standard vector representation, Proposition \ref{proppp} provides a condition of normality holding only for the binary sequence of the vector that corresponds to the binary expansion of some unknown number. In other words, if we conclude that $\lim_{n \rightarrow + \infty}\| [x]_{ns}^{(n)} \| = \lim_{n \rightarrow +\infty} \| [x^c]_{ns}^{(n)} \|$ we prove that the binary sequences in the vectors are normal without knowing which numbers correspond to the sequences. 

In practice, if the condition of proposition \ref{proppp} is satisfied for a given vector, when can refer to its normality intending that all the numbers corresponding to that vector are normal in base 2. 
The following result links the normality of $[x]_{ns}^{(n)}$ to the normality of $[x]^{(n)}$.

\begin{theorem}\label{teone} Given a real binary number $x$, if $[x]_{ns}{(n)}$ is normal then $x$ is normal.
\end{theorem}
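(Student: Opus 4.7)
The plan is to read the hypothesis through Proposition \ref{propnorm} and the non-standard complement identity (from the remark after Proposition \ref{norme}), reduce it to an asymptotic relation on the integer representatives $x^{*}$ and $(x^c)^{*}$, and then translate back to the standard vectors so that Proposition \ref{proppp} applies directly to $[x]^{(n)}$.

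First I would unpack the assumption. Normality of $[x]_{ns}^{(n)}$ is, by the author's convention, the non-standard analog of the norm equality in Proposition \ref{proppp}:
\[
\lim_{n\to+\infty}\|[x]_{ns}^{(n)}\|\;=\;\lim_{n\to+\infty}\|[x^c]_{ns}^{(n)}\|.
\]
Proposition \ref{propnorm} rewrites these as $\sqrt{x^{*}}$ and $\sqrt{(x^c)^{*}}$, while the complement identity forces $x^{*}+(x^c)^{*}=2^{n}-1$. Combining these gives $x^{*}/(2^{n}-1)\to 1/2$, and since $x^{*}=\sum_{i=1}^{n}[x]_{i}^{(n)}\,2^{n-i}$, this is the weighted digit balance
\[
\sum_{i=1}^{n}\bigl(2\,[x]_{i}^{(n)}-1\bigr)\,2^{n-i}\;=\;o(2^{n}).
\]

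Next I would try to upgrade this weighted balance to the unweighted digit balance $\sum_{i=1}^{n}(2[x]_{i}^{(n)}-1)=o(n)$, which by Proposition \ref{proppp} is exactly the normality criterion for $x$. My approach would be to apply the hypothesis at every truncation scale, exploiting that $[x]_i^{(m)}=[x]_i^{(n)}$ for all $i\le m\le n$: the asymptotic relation must therefore hold simultaneously at each level $m$. Taking differences of the weighted balances at consecutive levels should peel off one digit at a time, and accumulating these increments rebuilds the plain digit count on which Proposition \ref{proppp} can then be applied.

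The step I expect to be the real obstacle is precisely this upgrade from the weighted balance to the unweighted one. The geometric weights $2^{n-i}$ concentrate the whole condition on the leading digits, so a priori one can imagine digit patterns in which the weighted sum stays balanced while the plain digit frequency drifts away from $1/2$ (the extreme case being $[x]^{(n)}=[1\,0\,0\,\ldots]'$, for which $x^{*}/(2^{n}-1)\to 1/2$ but the unweighted frequency is not $1/2$). Closing this gap requires exploiting the uniform-in-$n$ nature of the hypothesis and carefully controlling how the $o$-terms at different scales interact; without that, the argument would at best pin down only an initial segment of the binary expansion, so delivering a clean proof of the full implication is the delicate technical heart of the theorem.
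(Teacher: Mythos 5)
Your reduction of the hypothesis is exactly right: via Proposition \ref{propnorm} and the non-standard complement identity, ``$[x]_{ns}^{(n)}$ is normal'' in the paper's sense amounts to $x^{*}/(2^{n}-1)\to\tfrac12$, i.e.\ the weighted digit balance $\sum_{i}(2[x]_{i}^{(n)}-1)2^{n-i}=o(2^{n})$. But the step you flag as the obstacle --- upgrading this to the unweighted balance $\sum_{i}(2[x]_{i}^{(n)}-1)=o(n)$ required by Proposition \ref{proppp} --- is not merely delicate: it cannot be closed, and your own example already shows why. For $[x]^{(n)}=[1\,0\,0\,\ldots\,0]'$ one has $x^{*}=2^{n-1}$, so $\|[x]_{ns}^{(n)}\|=\sqrt{2^{n-1}}=\sqrt{2^{n}/2}$ and the non-standard normality hypothesis is satisfied exactly, yet the represented number $1.000\ldots$ is plainly not normal. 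The weights $2^{n-i}$ concentrate all the mass on the first few digits, so the hypothesis constrains essentially nothing about the digit frequency; no bookkeeping of the $o$-terms across truncation scales can recover it. The implication in Theorem \ref{teone} is false under the paper's own definitions, so the peeling-off strategy has no valid endpoint.

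For comparison, the paper's own proof takes a different and equally unsound route: it notes that permutations preserve norms, picks a permutation $\bar{p}$ rebalancing the blocks $[v]^{1},\ldots,[v]^{n}$, and then argues that drawing the $i$-th digit from $\bar{p}([v]^{i})$ yields $0$ or $1$ ``with probability $\tfrac12$,'' concluding that every resulting ordered sequence --- in particular the actual digit sequence of $x$ --- is normal. This conflates a specific deterministic sequence with a random draw from an ensemble: the fact that a randomly assembled sequence would be balanced says nothing about the one particular sequence that $x$ realizes, and the argument would equally ``prove'' that $1.000\ldots$ is normal. So your instinct that the weighted-to-unweighted upgrade is the heart of the matter is correct; the honest conclusion is that neither your outline nor the paper's argument establishes the theorem, because the theorem itself fails.
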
 
\begin{proof} If the vector $[x]_{ns}^{(n)}$ is normal, the following holds:
\[\lim_{n \rightarrow +\infty} \frac{\| [x]_{ns}^{(n)} \|}{\sqrt{\frac{2^n}{2}}} = \lim_{n \rightarrow +\infty} \frac{\| [x^c]_{ns}^{(n)} \|}{\sqrt{\frac{2^n}{2}}} = 1.
\]
For $n \rightarrow +\infty$ the number of ones tend to equal the number of zeros; we apply a permutation $p$ changing the order of the entries of $[x]_{ns}^{(n)}$ and call $p\left([x]_{ns}^{(n)}\right)$ the reordered vector. Since no permutation is able to change the norm of a vector, we have that $p\left([x]_{ns}^{(n)}\right)$ is normal for all permutations $p$. We choose the permutation $\bar{p}$ such that the vectors $[v]^1, \ldots, [v]^n$ concatenated in $[x]_{ns}^{(n)}$ have the same proportions of zeros and ones. (Note that the permutation $\bar{p}$ is not unique). Consider the ordered sequences obtained drawing the $i^{th}$ element of the sequence from the vector $\bar{p}([v]^i)$; the probability of having zero/one in position $i$ in the ordered sequence is equal to $\frac{1}{2}$. As a consequence, every ordered sequence is normal in base 2. By construction, the sequence represented by the elements of vector $[x]^{(n)}$ for $n \rightarrow +\infty$ is one of the possible ordered sequences.  

\end{proof}

\section{Normality of $\sqrt{2}$ in base 2}

Set $x = \sqrt{2}$ as the binary expansion of $\sqrt{2}$ and $[x]^{(n)} = [\sqrt{2}]^{(n)}$ its approximation by truncation at the first $n$ digits in the vector representation. Following Definition \ref{def1} and Remark \ref{rem1}, the vector representation $[\sqrt{2}]^{(n)}$ corresponds to the vector representation of the number $2^{n-1} \sqrt{2}$ truncated after the first $n$ digits. Note that, given $n$, $[2^{n-1}\sqrt{2}]^{(n)}$ is the vector representation of an integer number sharing with the $\sqrt{2}$ the sequence of the digits till the truncation.



We define the vector representation of $2^{n-1} \sqrt{2}$ as $[2^{n-1} \sqrt{2}]^{(n)}$ and the non standard vector representation as $[2^{n-1} \sqrt{2}]_{ns}^{(n)}$.

Considering the result in Proposition \ref{propnorm}, we can calculate $\|[2^{n-1} \sqrt{2}]_{ns}^{(n)}\|$ following the following proportion: 
\[
2^n-1 : 2^{n-1}\sqrt{2} = \|[2^n-1]_{ns}^{(n)}\| : \|[2^{n-1} \sqrt{2}]_{ns}^{(n)}\|
\]
where, by construction, $\|[2^n-1]_{ns}^{(n)}\| = \sqrt{2^n-1}$ and, for large values of $n$, $\|[2^n-1]_{ns}^{(n)}\| \approx \sqrt{2^n}$.
Solving the proportion and calculating the limit for $n \rightarrow +\infty$ in order to consider the infinite sequence of the digits in the binary expansion we obtain:
\begin{equation}
\lim_{n \rightarrow +\infty} ||[2^{n-1} \sqrt{2}]_{ns}^{(n)}|| = +\infty \label{ris0}
\end{equation}
and
\begin{equation}
\lim_{n \rightarrow +\infty} \frac{||[2^{n-1} \sqrt{2}]_{ns}^{(n)}||}{\sqrt{\frac{2^n}{2}}} = \lim_{n \rightarrow +\infty} \frac{\frac{2^{n-1}\sqrt{2} \quad \sqrt{2^n-1}}{2^n-1}}{\sqrt{\frac{2^n}{2}}} = 1  \label{ris2}
\end{equation}
If we consider the results (\ref{ris0}) and (\ref{ris2}) together with Proposition \ref{norme} we obtain that 
\[
\lim_{n \rightarrow +\infty} ||[(2^{n-1} \sqrt{2})^c]_{ns}^{(n)}|| = \sqrt{\frac{2^n}{2}} = \lim_{n \rightarrow +\infty} ||[2^{n-1} \sqrt{2}]_{ns}^{(n)}||
\]
ensuring that the norm of the non standard vector representations of $[2^{n-1} \sqrt{2}]_{ns}^{(n)}$ and its complement have the same norm for $n \rightarrow +\infty$. For Proposition \ref{proppp} the two numbers represented by the vectors $[(2^{n-1} \sqrt{2})^c]_{ns}^{(n)}$ and $[2^{n-1} \sqrt{2}]_{ns}^{(n)}$ are normal in base $b = 2$.
Considering the results in Theorem \ref{teone} and Remark \ref{rem1} the number $x = \sqrt{2}$ is normal in base 2.

In order to explain the argumentation we technically show the idea through a toy example. 

\begin{example}\label{ex1}
We set 
\[x = \sqrt{2} = 1.01101010000010011110 \ldots
\]    
and $n = 4$ for simplicity.

Following the Definitions \ref{def1} and \ref{defns}, we have
\[[x]^{(4)} = [\sqrt{2}]^{(4)} = [1011] = [2^3 \sqrt{2}]^{(4)} \quad \quad [x]_{ns}^{(4)} = [11111111 \quad 0000 \quad 11 \quad 1]
\]
Considering the proportion between the norm of non standard vector representation and the number presented in Table \ref{tab1} and in Remark \ref{rem1}, we have that $||[x]_{ns}^{(4)}|| = \sqrt{11}$.
Note that in this example 
\[[x^c]_{ns}^{(4)} = [00000000 \quad 1111 \quad 00 \quad 0] \quad and \quad ||[x^c]_{ns}^{(4)}|| = \sqrt{4}
\] 
while
\[[2^n-1]_{ns}^{(4)} = [11111111 \quad 1111 \quad 11 \quad 1] \quad and \quad ||[2^n-1]_{ns}^{(4)}|| = \sqrt{15} 
\]

Passing to the limit for $n \rightarrow +\infty$, first we prove the normality in base 2 of the number $[x]_{ns}^{(n)}$, then we choose a permutation $\bar{p}$ as described in Theorem \ref{teone} obtaining what follows:
\[\lim_{n \rightarrow +\infty} \bar{p}([x]_{ns}^{(n)}) = [111 \ldots 111 000 \ldots 000 \quad 00 \ldots 00 11 \ldots 11 \quad \ldots \quad 1 \ldots 1 0 \ldots 0 \quad \ldots]
\]
The notation highlights that each vector $\bar{p}([v]^i)$ is rebalanced in terms of zeros and ones (representing a fair coin) but with a different number of elements.  


\end{example}

\section{Conclusion} In this paper we provide a technique to investigate normality of irrational numbers in base 2. An infinite class of number is proved to be normal. The result is interesting considering the difficulty to build normal numbers and/or prove their normality. We also show that $\sqrt{2}$ is normal in base 2. Despite the approach described in Section 2 is general and suitable in power for testing normality of a generic irrational numbers, it is not clear if it can be useful to investigate the normality of well known irrational numbers conjectured to be normal as, for example, $\pi$ or $e$. The difficulty to prove the normality of a given irrational binary number relies on the randomness of its sequence of digits; the result proved in the paper uses the fact that normal numbers, i.e. numbers showing a random behavior in the digital expansion, correspond one to one in a deterministic way to specific points on the real line. 

\newpage

\end{document}